\newenvironment{proof}{\noindent {\bf Proof:}}{\hfill $\Box$}
\newtheorem{lemma}{Lemma}
\newtheorem{remark}{Remark}
\title{\bf 
Moment LMI approach\\
to LTV impulsive control
}
\begin{document}

\author{Mathieu Claeys$^{1,2}$, Denis Arzelier$^{1,2}$,\\
Didier Henrion$^{1,2,3}$, Jean-Bernard Lasserre$^{1,2,4}$}

\footnotetext[1]{CNRS; LAAS; 7 avenue du colonel Roche, F-31077 Toulouse; France.}
\footnotetext[2]{Universit\'e de Toulouse; UPS, INSA, INP, ISAE; UT1, UTM, LAAS; F-31077 Toulouse; France}
\footnotetext[3]{Faculty of Electrical Engineering, Czech Technical University in Prague,
Technick\'a 2, CZ-16626 Prague, Czech Republic}
\footnotetext[4]{Institut de Math\'ematiques de Toulouse, Universit\'e de
Toulouse; UPS; F-31062 Toulouse, France.}

\date{Draft of \today}

\maketitle

\begin{abstract}
In the 1960s, a moment approach to linear time varying (LTV) minimal norm impulsive optimal control
was developed, as an alternative to direct approaches (based on discretization of the equations
of motion and linear programming)
or indirect approaches (based on Pontryagin's maximum principle). This paper revisits these
classical results in the light of recent advances in convex optimization, in particular the use
of measures jointly with hierarchy of linear matrix inequality (LMI) relaxations.
Linearity of the dynamics allows us to integrate system trajectories and
to come up with a simplified LMI hierarchy where the only unknowns are moments of a
vector of control measures of time. In particular, occupation measures of state and
control variables do not appear in this formulation. This is in stark contrast with
LMI relaxations arising usually in polynomial optimal
control, where size grows quickly as a function of the relaxation order. Jointly with
the use of Chebyshev polynomials (as a numerically more stable polynomial basis),
this allows LMI relaxations of high order (up to a few hundreds) to be solved numerically.
\end{abstract}

\section{Introduction}

In the 1960s, it was realized that many physically relevant problems
of optimal control were inappropriately formulated in the sense
that the optimum control law (a function of time and/or state)
cannot be found if the admissible functional space is too small.
This observation was the main driving force of the papers
\cite{neustadt,rishel,schmaedeke} which introduce optimal control
problems formulated in the space of measures. The approach
is well summarized in the textbook
\cite{luenberger}, which contains some (academic) examples
of optimal control problems without solutions. This motivated the
introduction of many concepts of functional analysis (density,
completeness, duality, separability) in control engineering,
building up on the advances on mathematical control theory
and calculus of variations. As promoted in \cite{luenberger},
an optimal control problem should be formulated in the dual
of a Banach space which is large enough
for the solution to be attained.

Most of the literature on numerical optimal control focuses
on direct approaches (based on discretization of the equations
of motions and linear programming) or indirect approaches
(based on the necessary optimal conditions of Pontryagin's
maximum principle). When applied to optimal control problems
whose optima cannot be attained, these numerical approaches
typically face difficulties. In this context, we believe
that it is timely to revisit classical results by Neustadt \cite{neustadt}
on the formulation of optimal control problems for
linear time varying (LTV) systems as a problem of
moments, where the decision variables (from which an optimal
control law can be extracted) are measures subject to
a finite number of linear constraints.

There is an important literature, especially from the 1960s,
on moment formulations to optimal control of ordinary differential
equations (ODEs) and partial differential equations (PDEs),
see e.g. \cite{avdonin} and references therein, as well as the comments of
\cite[p. 586]{fattorini}. Currently,
this approach is not frequently used by engineers,
and in our opinion this may be due, on the one hand,
to the technicality of the underlying concepts of functional analysis,
and, on the other hand, to
the absence of numerical methods to deal satisfactorily
with optimization problems in large functional spaces
such as spaces of measures or distributions.
Regarding the first point, we strongly recommend
the textbook \cite{luenberger} which is a very readable
account of elementary functional analysis useful for
engineers. Regarding the second point, there has been
recent advances in convex optimization, especially
semidefinite programming (optimization over linear
matrix inequalities, LMIs), for solving numerically
generalized problems of moments, i.e. linear programming
(LP) problems in Banach spaces of measures, see
\cite{lasserre,gloptipoly3} and references therein.

Our contribution is therefore to revisit the classical
formulation by Neustadt \cite{neustadt} in the light of
recent advances on LMI hierarchies for solving
generalized problems of moments. In our previous work
\cite{sicon}, we formulated polynomial optimal control
problems with semialgebraic state and control constraints
as generalized problems of moments that can be
solved with asymptotically converging LMI hierarchies.
The optimal control problem is relaxed to an LP problem
in the space of occupation measures, which are
measures of time, state and control encoding
the trajectories of the system.
The main drawback of this approach is the rapid growth
of the size of the LMI problems in the hierarchy, making
the approach applicable to small-size problems only
(say at most 3 states and 2 controls). In the
current paper, which focuses on the specific case
of LTV dynamics depending affinely on the control variable,
we first replace control variables by interpreting
them as measures of time. This is similar to our previous work \cite{impulse}
which also dealt with impulsive control design in a more general setting. Second, we
get rid of the state variables by integrating numerically
the LTV ODE. This is possible because the ODE depends
linearly on the state and the control. As a result, the optimal control problem
is relaxed to an LP on measures depending only
on time. It follows that there is no need for an LMI
hierarchy since in the univariate case finite-dimensional
moment LMI conditions are necessary and sufficient.
However, there is still a hierarchy of LMI conditions
in connection with the polynomial approximations
of increasing degree we use to model the integrated system
trajectories. To deal with those high degree univariate polynomials
and moment matrices, we use Chebyshev polynomials instead
of monomials. Indeed, high degree Chebyshev polynomials behave much better
numerically than monomials, and we can rely on functionalities
of the {\tt chebfun} package for Matlab \cite{chebfun,trefethen}
to integrate the LTV ODE and manipulate polynomials. To illustrate the above methodology, we show on some examples how to obtain very good approximations of impulse times and amplitudes of an optimal solution.

\section{Relaxed linear optimal control}

Consider the linear time varying (LTV) optimal control problem
\begin{equation}\label{ocp}
\begin{array}{rcll}
q^* & := & \inf & \displaystyle \|u\|_1 := \sum_{j=1}^m \int_{t_I}^{t_F} |u_j(t)|dt \\
& & \mathrm{s.t.} & \dot{x}(t) = A(t)x(t) + B(t)u(t) \\
& & & x(t_I) \in {\mathbb R}^n \quad\mathrm{given}\\
& & & x(t_F) \in {\mathbb R}^n \quad\mathrm{given}
\end{array}
\end{equation}
where the minimization is w.r.t. a vector of control functions
$u_j \in L^1([t_I,t_F])$, $j=1,\ldots,m$, and
$A \in L^{\infty}([t_I,t_F]; {\mathbb R}^{n\times n})$,
$B \in C([t_I,t_F]; {\mathbb R}^{n\times m})$,
on a given bounded time interval $[t_I,t_F] \subset {\mathbb R}$.

In general the infimum is not attained, and the optimal control problem is relaxed to
\begin{equation}\label{p}
\begin{array}{rcll}
p^* & := & \inf & \displaystyle \|\mu\|_{TV} := \sum_{j=1}^m \int_{t_I}^{t_F} |\mu_j|(dt) \\
& & \mathrm{s.t.} & x(dt) = A(t)x(t)dt + B(t)\mu(dt) \\
& & & x(t_I) \in {\mathbb R}^n \quad\mathrm{given}\\
& & & x(t_F) \in {\mathbb R}^n \quad\mathrm{given}
\end{array}
\end{equation}
where the minimization is w.r.t. a vector of (signed) measures
$\mu_j \in M([t_I,t_F])$, $j=1,\ldots,m$, and $\|\mu\|_{TV}$ denotes
the total variation, or norm, of vector measure $\mu$.
A measure in $M([t_I,t_F])$ of finite norm
is identified (by a representation theorem
of F. Riesz, see e.g. \cite[Section 21.5]{royden})
as a continuous linear functional acting
on the space of continuous functions $C([t_I,t_F])$.

Problem (\ref{p}) is a relaxation of problem (\ref{ocp})
since we enlarge the space of admissible controls.
Indeed, problem (\ref{ocp}) is equivalent to problem (\ref{p}) restricted
to measures which are absolutely continuous w.r.t. time, i.e. $\mu_j(dt) = u_j(t)dt$
for some $u_j \in L^1([t_I,t_F])$, $j=1,\ldots,m$.
The motivation for introducing relaxed problem (\ref{p}) is as follows.

\begin{lemma}\label{relax}
The infimum is attained in problem (\ref{p}) and it is equal to the infimum of problem (\ref{ocp}),
i.e. $q^*=p^*$.
\end{lemma}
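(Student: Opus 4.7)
The plan is to establish three statements: $p^\ast \le q^\ast$ (trivial), attainment of the infimum in (\ref{p}), and $q^\ast \le p^\ast$. Throughout I will rewrite the dynamics via the state transition matrix $\Phi(t,s)$ of $\dot{x}=A(t)x$, so that the endpoint constraint becomes the single linear moment equation
$$\int_{t_I}^{t_F} \Phi(t_F,s)\,B(s)\,\mu(ds) \;=\; x(t_F)-\Phi(t_F,t_I)\,x(t_I).$$
Since $A\in L^{\infty}$ implies $\Phi$ is absolutely continuous and $B$ is continuous, the vector-valued integrand $s\mapsto\Phi(t_F,s)B(s)$ is continuous on $[t_I,t_F]$. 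The trivial direction $p^\ast\le q^\ast$ is then immediate: any admissible $u\in L^1([t_I,t_F])^m$ for (\ref{ocp}) gives rise to absolutely continuous measures $\mu_j=u_j\,dt$ admissible for (\ref{p}), with $\|\mu\|_{TV}=\|u\|_1$.

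For attainment in (\ref{p}), I would take a minimizing sequence $\{\mu^{(k)}\}$, which has bounded total variation. Since $M([t_I,t_F])$ is the dual of $C([t_I,t_F])$ by the Riesz representation theorem, Banach--Alaoglu yields a subsequence converging weakly-$\ast$ to some $\mu^\ast$. The endpoint constraint above pairs $\mu$ against continuous test functions and is therefore weak-$\ast$ continuous, so $\mu^\ast$ is admissible. Combined with weak-$\ast$ lower semicontinuity of $\|\cdot\|_{TV}$, this gives $\|\mu^\ast\|_{TV}\le p^\ast$, so $\mu^\ast$ is optimal.

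For the reverse inequality $q^\ast \le p^\ast$, the idea is to approximate $\mu^\ast$ by $L^1$ controls and correct the resulting endpoint mismatch. With a standard nonnegative mollifier $\phi_\epsilon$, set $u^{(\epsilon)}=\phi_\epsilon \ast \mu^\ast$, handling the boundaries by a one-sided mollification so that the support stays in $[t_I,t_F]$. A routine Fubini computation gives $\|u^{(\epsilon)}\|_1\le\|\mu^\ast\|_{TV}$ and $u^{(\epsilon)}\,dt\to\mu^\ast$ weakly-$\ast$, hence the terminal error $e^{(\epsilon)}:=x^{(\epsilon)}(t_F)-x(t_F)$ tends to $0$ by weak-$\ast$ continuity of the constraint. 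To restore feasibility exactly, I would use that the reachable set from the origin is a fixed linear subspace $R\subset\mathbb{R}^n$ that is identical for $L^1$ controls and for measures, since $R$ is the image of a linear map into a finite-dimensional space and is therefore automatically closed. Fixing $L^1$ controls $w^{(1)},\ldots,w^{(r)}$ whose terminal responses form a basis of $R$, decompose $e^{(\epsilon)}=\sum_i \alpha_i^{(\epsilon)} w^{(i)}(t_F)$ with $|\alpha^{(\epsilon)}|\to 0$, and subtract the corresponding control $\sum_i \alpha_i^{(\epsilon)} w^{(i)}$ from $u^{(\epsilon)}$. The result is feasible for (\ref{ocp}) with cost at most $\|\mu^\ast\|_{TV}+O(|\alpha^{(\epsilon)}|)\to p^\ast$.

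The main obstacle is this final correction step: if (\ref{p}) is feasible but (\ref{ocp}) is not, then $q^\ast=+\infty>p^\ast$ and the claim fails. The argument above circumvents this by using closedness of finite-dimensional subspaces to show that feasibility of (\ref{p}) automatically implies feasibility of (\ref{ocp}), which is why the lemma holds without any stated controllability hypothesis. A secondary technicality is ensuring the mollified controls have support in $[t_I,t_F]$ and that the bound $\|u^{(\epsilon)}\|_1\le\|\mu^\ast\|_{TV}$ is not degraded near the boundary, which requires some care when $\mu^\ast$ has atoms at $t_I$ or $t_F$.
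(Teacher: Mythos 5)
Your proof is correct, but it takes a genuinely different route from the paper. The paper does not argue from scratch: it proves attainment in (\ref{p}) by citing Neustadt's Theorem 1, notes via Neustadt's Theorem 4 that an optimal measure can be taken as a signed sum of at most $n$ Dirac measures, and obtains $q^*=p^*$ by invoking Neustadt's construction (pp.~45--46 of that reference) of admissible $L^1$ controls $u^k$ with $\|u^k\|_1\to p^*$. You instead give a self-contained direct-method argument: weak-$\ast$ compactness of TV-bounded sets (Riesz representation plus Banach--Alaoglu), weak-$\ast$ continuity of the finitely many moment constraints against the continuous kernel $\Phi(t_F,\cdot)B(\cdot)$, and weak-$\ast$ lower semicontinuity of the total variation norm for attainment; and, for $q^*\le p^*$, a one-sided mollification of the optimal measure combined with an exact endpoint correction inside the finite-dimensional reachable subspace, whose closedness also shows that feasibility of (\ref{p}) implies feasibility of (\ref{ocp}) --- a point the paper leaves implicit in the citation. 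What the paper's route buys is brevity and the additional structural information (an optimal $\mu$ supported on at most $n$ atoms) that motivates the impulsive interpretation later; what your route buys is independence from Neustadt's specific theorems, an explicit treatment of the feasibility transfer between the two formulations, and an argument that is essentially the same machinery the paper later reuses in the proof of its Lemma \ref{nogap}. The boundary issues you flag (atoms at $t_I$ or $t_F$, keeping mollified supports inside $[t_I,t_F]$) are real but routine, handled exactly as you indicate by shifting mass inward and using uniform continuity of the kernel.
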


The proof of Lemma \ref{relax} is relegated to the end of Section \ref{moments}.
We will introduce a numerical method to deal directly
with relaxed problem (\ref{p}) in measure space $M$,
bypassing the potential difficulties
coming from the fact that the infimum in problem (\ref{ocp})
is typically not attained in function space $L^1$.
Before this, we need to reformulate optimal control problem (\ref{p}) as a
problem of moments.

\section{Problem of moments}\label{moments}

Now we integrate the differential equation in problem (\ref{p}) to obtain
an equivalent problem of moments. For more details, see e.g. \cite[Section 2.2]{bressan}.

Let $f_i \in W^{1,1}([t_I,t_F];{\mathbb R}^n)$ denote the absolutely continuous
solution of the Cauchy problem $\dot{f_i}(t) = A(t)f_i(t)$ with $f_i(t_I)$
equal to the $i$-th column of $I_n$, the $n$-by-$n$ identity matrix, for $i=1,\ldots,n$.
The matrix
\[
F(t) := \left[\begin{array}{ccc}
f_1(t) & \cdots & f_n(t)
\end{array}\right] \in W^{1,1}([t_I,t_F]; {\mathbb R}^{n\times n})
\]
therefore satisfies\footnote{A function belongs to the Sobolev space $W^{1,1}$
of absolutely continuous functions if it is the integral of a function
of Lebesgue space $L^1$.} the matrix ODE
\[
\dot{F}(t)=A(t)F(t), \quad F(t_I) = I_n, \quad t \in [t_I,t_F].
\]
From \cite[Theorem 2.2.3]{bressan}\footnote{In this reference the authors
define the fundamental matrix solution as a bivariate matrix $M(t,s)$
such that $\partial M(t,s)/\partial t = A(t)M(t,s)$, $M(s,s)=I_n$.
The connection with our definition is that $F(t)=M(t,t_I)$.},
matrix $F^{-1}(t)$ is differentiable
and any function $x(t)$ satisfying
\[
x(t_F) = F(t_F)\left[x(t_I) + \int_{t_I}^{t_F} F^{-1}(t)B(t)\mu(dt)\right]
\]
is a solution to the differential equation
\begin{equation}\label{ode}
x(dt) = A(t)x(t)dt+B(t)\mu(dt), \quad t \in [t_I,t_F].
\end{equation}
Letting
\[
\begin{array}{rcl}
G(t) & := & (F^{-1}(t) B(t))^T \\
& = & \left[\begin{array}{ccc}
g_1(t) & \cdots & g_n(t)
\end{array}\right]
\in C([t_I,t_F]; {\mathbb R}^{m\times n}), \\[1em]
h & := & F^{-1}(t_F)x(t_F) - x(t_I) \in {\mathbb R}^m,
\end{array}
\]
we can replace the differential equation (\ref{ode})
with the integral equation:
\[
\int_{t_I}^{t_F} G^T(t)\mu(dt) = h.
\]
It follows that problem (\ref{p}) can be written equivalently as
\begin{equation}\label{m}
\begin{array}{rcll}
p^* & = & \min & \displaystyle \|\mu\|_{TV} \\
& & \mathrm{s.t.} & \displaystyle \langle g_i, \mu \rangle = h_i, \quad i=1,\ldots,n
\end{array}
\end{equation}
where
\[
\langle g_i, \mu \rangle := \int_{t_I}^{t_F} g^T_i(t) \mu(dt) =
\sum_{j=1}^m \int_{t_I}^{t_F} g_{i,j}(t) \mu_j(dt)
\]
denotes the duality bracket between $C([t_I,t_F]; {\mathbb R}^m)$ and
its dual $M([t_I,t_F]; {\mathbb R}^m)$, a bilinear form pairing $C([t_I,t_F]; {\mathbb R}^m)$ and $M([t_I,t_F]; {\mathbb R}^m)$.
Problem (\ref{m}) is a problem of moments consisting of finding $m$ measures
subject to $n$ linear constraints.

\begin{proof} {\sl (of Lemma \ref{relax})}
The proof that the infimum of problem (\ref{p}) is attained follows
from \cite[Theorem 1]{neustadt}. Moreover, in \cite[Theorem 4]{neustadt} it is
shown that there exists a solution $\mu$ to problem (\ref{m}), hence
to problem (\ref{p}),
which is the (signed) sum of at most $n$ Dirac measures.
Finally, it is shown in \cite[pp. 45-46]{neustadt} that there is
a sequence of functions $u^k \in L^1([t_I,t_F]; {\mathbb R}^m)$,
$k=1,2,\ldots$ which are admissible for problem (\ref{ocp}),
i.e. $\mu(dt)=u^k(t)dt$ satisfies the constraints in problem (\ref{m}),
and which are such that $\lim_{k\to\infty}\|u^k\|_1 = p^*$.
\end{proof}

\section{Primal and dual conic LP}\label{conic}
By decomposing each signed measure
$\mu_j$ as a difference of two nonnegative measures (using
the Jordan decomposition theorem, see e.g. \cite[Section 17.2]{royden}), i.e.
\[
\mu_j = \mu^+_j - \mu^-_j, \quad \mu^+_j \geq 0, \quad \mu^-_j \geq 0, \quad j=1,\ldots,m,
\]
problem (\ref{m}) can be written as a linear programming (LP) problem
on the cone of nonnegative measures
\begin{equation}\label{plp}
\begin{array}{rcll}
p^* & = & \inf & \displaystyle  \langle 1,\mu^+ \rangle + \langle 1, \mu^- \rangle \\
& & \mathrm{s.t.} & \displaystyle \langle g_i, \mu^+ \rangle - \langle g_i, \mu^-
\rangle = h_i, \quad i=1,\ldots,n \\
& & & \mu^+ \geq 0, \quad \mu^- \geq 0
\end{array}
\end{equation}
where $1$ denotes the $m$-dimensional vector of functions identically equal to one,
and the above minimization is w.r.t. two vector-valued nonnegative measures
$\mu^+ \in M([t_I,t_F]; {\mathbb R}^m)$, $\mu^- \in M([t_I,t_F]; {\mathbb R}^m)$.
It is easy to show that problem (\ref{plp}) is equivalent to problem (\ref{m}).


Problem (\ref{plp}) is the dual of the following LP on the cone of nonnegative continuous functions (see \cite{Shapiro2001} for details of the derivation):
\begin{equation}\label{dlp}
\begin{array}{rcll}
d^* & = & \sup & \displaystyle \sum_{i=1}^n y_i h_i \\
& & \mathrm{s.t.} & \displaystyle z^+(t): = 1 + \sum_{i=1}^n y_i g_i(t) \geq 0
\quad t \in[t_I,t_F]\\
& & & z^-(t): = \displaystyle 1 - \sum_{i=1}^n y_i g_i(t) \geq 0
\quad t \in[t_I,t_F]
\end{array}
\end{equation}
where the maximization is w.r.t. a vector $y \in {\mathbb R}^n$ parametrizing
two vector-valued nonnegative continuous functions
$z^+ \in C([t_I,t_F]; {\mathbb R}^m)$, $z^- \in C([t_I,t_F]; {\mathbb R}^m)$.
Denoting
\[
\|z\|_{\infty}:=\sup_{t\in[t_I,t_F], j=1,\ldots,m}|z_j(t)|
\]
for any $z \in C([t_I,t_F]; {\mathbb R}^m)$,
remark that LP problem (\ref{dlp}) can be also written as
\begin{equation}
\begin{array}{rcll}
d^* & = & \sup & h^T y \\
& & \mathrm{s.t.} & \|G(t) y\|_{\infty} \leq 1.
\end{array}
\label{formulation-classique}
\end{equation}

\begin{lemma}\label{nogap}
There is no duality gap between LP (\ref{plp}) and (\ref{dlp}), i.e. $p^*=d^*$.
\end{lemma}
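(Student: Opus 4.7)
The plan is to combine weak duality via a Lagrangian pairing with a standard strong-duality theorem for conic linear programs, whose hypothesis reduces here to a simple Slater condition on the dual.

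First I would establish weak duality. Writing the Lagrangian associated with the primal (\ref{plp}),
\[
L(\mu^+,\mu^-,y) = \langle 1,\mu^+\rangle + \langle 1,\mu^-\rangle + \sum_{i=1}^n y_i\bigl(h_i - \langle g_i,\mu^+\rangle + \langle g_i,\mu^-\rangle\bigr),
\]
and regrouping the terms in $\mu^+$ and $\mu^-$ yields
\[
L(\mu^+,\mu^-,y) = \langle z^-,\mu^+\rangle + \langle z^+,\mu^-\rangle + h^\top y.
\]
For any primal-feasible $(\mu^+,\mu^-)$ the parenthesized term vanishes, so the primal objective equals the right-hand side above; for any dual-feasible $y$ the functions $z^\pm$ are nonnegative on $[t_I,t_F]$ and $\mu^\pm$ are nonnegative measures, so both duality brackets are nonnegative. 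Hence $\langle 1,\mu^+\rangle + \langle 1,\mu^-\rangle \ge h^\top y$, and passing to inf over primal feasible points and sup over dual feasible points yields $d^* \le p^*$.

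Second, to rule out a duality gap, I would invoke a standard strong-duality theorem for conic LP in Banach spaces (as in the Shapiro reference cited above), whose typical hypothesis is a Slater interior-point condition on one side combined with finiteness of the optimal value. The dual Slater condition is immediate here: taking $\bar y = 0$ gives $z^\pm \equiv 1$, strictly positive continuous functions, which lie in the norm-interior of the nonnegative cone of $C([t_I,t_F];{\mathbb R}^m)$. Primal feasibility and $p^*<\infty$ are provided by Lemma \ref{relax}. Together these suffice to conclude $p^* = d^*$.

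The main subtlety is ensuring that the Slater condition matches exactly the hypothesis of the infinite-dimensional conic-LP theorem being invoked: one must argue that $\bar y=0$ is a \emph{topological} interior point of the dual feasible set in the relevant function space. Since the interior of the nonnegative cone of $C([t_I,t_F];{\mathbb R}^m)$ under the sup norm consists precisely of functions bounded away from zero, the constant $1$ qualifies and no additional regularity is needed. Once the Slater verification is in place, no further work is required beyond citing the theorem.
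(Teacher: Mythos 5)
Your proof is correct, but it takes a genuinely different route from the paper. The paper works on the measure side: it collects the cost and the $n$ moment constraints into a vector $r(\mu^+,\mu^-)\in\mathbb{R}^{n+1}$, considers the image cone $R$ of the cone of nonnegative measures, and invokes Anderson--Nash (zero gap once $p^*$ is finite and $R$ is closed), proving closedness by a weak-* compactness (Alaoglu) argument that uses the compact support, the boundedness in total variation, and the continuity of $1$ and the $g_i$. You instead work on the function side: after the (correct) Lagrangian weak-duality computation, you exploit the fact that the nonnegative cone of $C([t_I,t_F];\mathbb{R}^m)$ has nonempty sup-norm interior and that $\bar y=0$ is strictly feasible for (\ref{dlp}), so a generalized Slater interior-point theorem (Luenberger's Lagrange duality theorem, or the corresponding conic results in the Shapiro reference) closes the gap. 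Both routes are standard and both rest on the same implicit feasibility assumption ($p^*<\infty$), which the paper also treats lightly; combined with your weak-duality step this gives the finiteness of $d^*$ that your cited theorem actually requires. Your route carries the small extra obligation of verifying that the Lagrangian dual of (\ref{dlp}) is exactly (\ref{plp}) (the unconstrained maximization over $y$ enforces the moment constraints --- essentially the regrouping you already performed), but it buys something the paper obtains only from Neustadt via Lemma \ref{relax}: attainment (and boundedness) of the optimal measures in (\ref{plp}). The paper's closedness argument, in turn, stays entirely on the measure side and would still apply in settings where the dual cone has empty interior and no Slater point is available.
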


\begin{proof}
Define the vector $r(\mu^+,\mu^-) \in {\mathbb R}^{n+1}$ with entries
$r_0(\mu^+,\mu^-):=\langle 1,\mu^+\rangle+\langle 1,\mu^-\rangle$,
$r_i(\mu^+,\mu^-):=\langle g_i,\mu^+\rangle-\langle g_i,\mu^-\rangle$,
$i=1,\ldots,n$ and the set
$R:=\{r(\mu^+,\mu^-) \: :\: (\mu^+,\mu^-) \in M^{2m}_+\} \subset {\mathbb R}^{n+1}$
where $M^{2m}_+$ denotes the cone of nonnegative measures in $M([t_I,t_F];{\mathbb R}^{2m})$.
Let us invoke \cite[Theorem 3.10]{anderson} which states that $p^*=d^*$ provided $p^*$
is finite and $R$ is closed. Finiteness of $p^*$ follows immediately since we minimize
the total variation. To prove closedness, we have to show that all accumulation points
of any sequence $r(\mu^+_n,\mu^-_n)$ belong to $R$. Since the supports
of the measures are compact and $p^*$ is finite, hence $\mu^+$ and $\mu^-$ are bounded,
the sequence $(\mu^+_n,\mu^-_n)$ is bounded. By the weak-* compactness of the
unit ball in the Banach space of bounded signed measures with compact support
(Alaoglu's Theorem, see e.g. \cite[Section 5.10]{luenberger}
or \cite[Section 15.1]{royden}), there is a subsequence $(\mu^+_{n_k},\mu^-_{n_k})$
that converges weakly-* to an element $(\mu^+,\mu^-) \in M^{2m}_+$. As $1$ and all $g_i$ belong to $C([t_I,t_F]; {\mathbb R}^m)$ then
$\lim_{k\to\infty} r(\mu^+_{n_k},\mu^-_{n_k})=r(\mu^+,\mu^-) \in R$. 
\end{proof}

Zero duality gap implies that any optimal pair $((\mu^+,\mu^-),(z^+,z^-))$
solving LPs (\ref{plp}-\ref{dlp}) satisfies the complementarity conditions
\[
\langle z^+_j,\mu^+_j \rangle = 0, \quad \langle z^+_j,\mu^-_j \rangle = 0, \quad j=1,\ldots,m.
\]
This means that the support of each measure $\mu^+_j$, resp. $\mu^-_j$, is included in the set
$\{t \in [t_I,t_F] : z^+_j(t) = 0\}$, resp. $\{t \in [t_I,t_F] : z^-_j(t) = 0\}$, for $j=1,\ldots,m$.

Note that formulation (\ref{formulation-classique}) of the dual problem dates back to the work of Neustadt \cite{neustadt} and was preferred to the primal formulation for numerical solution of the optimal control problem. One objective of this paper is to show that the recent advances on the moment problem give an efficient computation procedure for the primal problem. In particular, one obtains very good approximations of an optimal solution of (\ref{plp}) (impulse times and amplitudes).
\begin{remark}
The vector $G(t)y$ involved in LP problem (\ref{dlp})
is known as the primer vector introduced in the seminal
work \cite{Lawden63}. This primer vector is defined as the velocity
adjoint vector arising by the application of Pontryagin's maximum principle
to optimal trajectory problems. The primer vector must satisfy Lawden's
well-known necessary conditions for an optimal impulsive trajectory.
\end{remark}

\section{Integration of the LTV ODE}\label{integration}

In order to compute matrix $F(t)$, the last step before obtaining a tractable problem, we have to integrate numerically
the ODE $\dot{x}(t)=A(t)x(t)$.
Matrix $G(t)$ solves numerically the LTV system of equations $F(t)G^T(t)=B(t)$.
In practice, we use the Matlab software package Chebfun \cite{chebfun}
to build Chebyshev polynomial interpolants of the problem data $A(t)$ and $B(t)$.
Some of its specialized numerical routines to compute $F(t)$
and $G(t)$ have been used here.

We want to characterize the error of approximating $G(t)$ by its Chebyshev
polynomial interpolant on $d$ points, see \cite{trefethen} for an introduction
on this subject. Define $G_d(t)$ as the Chebyshev interpolant of $G(t)$ on $d$
points. Assume furthermore that the error induced by algebraic manipulations
for obtaining $G_d(t)$ can be properly controlled. Then the approximation error
$e_d:= \|G_d-G\|_{\infty}$ for large $d$ is conditionned by the regularity of $G(t)$.
We recall the main results of approximation theory which can all be found
in \cite{trefethen}:
\begin{itemize}
\item if $G(t)$ is $k$-times continuously differentiable, then $e_d \rightarrow 0$
at the algebraic rate of $\mathcal{O}(d^{-k})$ as $d \rightarrow \infty$;
\item if $G(t)$ is $k$-times differentiable with its $k$-th derivative of bounded variation,
then the algebraic rate is $\mathcal{O}(d^{-k})$;
\item if $G(t)$ is analytic, then there exists a positive $\rho$ such that
the algebraic rate is $\mathcal{O}(\rho^{-d})$.
\end{itemize}
This imposes some minimal requirements for $A(t)$ and $B(t)$ for our numerical
approach to work. Indeed, assuming $A(t)$ of bounded variation and $B(t)$ differentiable with derivative of bounded variation guarantees that the error converges
to zero for large approximation orders.

We now show that if $e_d \rightarrow 0$ as $d \rightarrow \infty$, we can build a hierarchy of moment relaxations of (\ref{m}) involving only approximate data and converging to $p^*$. For this,
let $F_d(t)$ denote the Chebyshev interpolant of $F(t)$ on $d$ points,
and let $h_d := F^{-1}_d(t_F)x(t_F)-F^{-1}_d(t_I)x(t_I)$.

\begin{lemma}
Consider the following relaxed moment problem with approximate data:
\begin{equation} \label{eq:probMomApprox} 
\begin{array}{rcl}
\tilde{p}_d^* = & \min & \|\mu\|_{TV}\\
& \mathrm{s.t.} &  \left| \int \! G_d^T \, d\mu - h_d  \right| \leq e_d \,( \|B\|\,|x(t_F)|
+ \|\mu\|_{TV}).
\end{array}
\end{equation}
Then $\tilde{p}_d^* \uparrow p^*$ as $d \rightarrow \infty$
\end{lemma}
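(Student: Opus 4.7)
The plan is to combine two ingredients: an upper bound $\tilde p_d^* \leq p^*$ valid for every $d$, and a weak-$*$ compactness argument yielding $p^* \leq \liminf_d \tilde p_d^*$. The $\uparrow$ in the statement then refers to approach from below.

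For the upper bound, I would take any measure $\mu$ admissible in the original moment problem (\ref{m}), so $\int G^T d\mu = h$, and check that it is admissible in (\ref{eq:probMomApprox}). Split
\[
\int G_d^T d\mu - h_d \;=\; \int (G_d - G)^T d\mu \;+\; (h - h_d).
\]
The first piece is bounded in norm by $e_d\|\mu\|_{TV}$ via $\|G_d - G\|_\infty \leq e_d$. For the second, using the definitions of $h$ and $h_d$ together with the identity $G = (F^{-1} B)^T$, a uniform interpolation estimate on $F^{-1}$ at the endpoints gives $|h - h_d| = O(e_d \|B\|\,|x(t_F)|)$; the paper's choice of slack $e_d(\|B\||x(t_F)| + \|\mu\|_{TV})$ is tailored precisely to absorb the sum. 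Taking the infimum over such $\mu$ yields $\tilde p_d^* \leq p^*$.

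For the lower bound, pick an almost-optimal sequence $\mu_d$ in (\ref{eq:probMomApprox}) so that $\|\mu_d\|_{TV} \to \liminf_d \tilde p_d^*$. Thanks to the upper bound, $\|\mu_d\|_{TV}$ is bounded by, say, $p^* + 1$ for $d$ large. Alaoglu's theorem, invoked as in the proof of Lemma \ref{nogap}, then supplies a subsequence $\mu_{d_k}$ converging weakly-$*$ to some $\mu^\ast \in M([t_I,t_F];\mathbb{R}^m)$. To identify $\mu^\ast$ as feasible for (\ref{m}), I would pass to the limit using the four-term decomposition
\[
\int G^T d\mu^\ast - h \;=\; \int (G - G_{d_k})^T d\mu_{d_k} + \Bigl(\int G_{d_k}^T d\mu_{d_k} - h_{d_k}\Bigr) + (h_{d_k} - h) + \int G^T d(\mu^\ast - \mu_{d_k}).
\]
The first term is $O(e_{d_k}\|\mu_{d_k}\|_{TV}) \to 0$; the second is controlled by the approximate constraint $e_{d_k}(\|B\||x(t_F)| + \|\mu_{d_k}\|_{TV}) \to 0$; the third tends to $0$ by the interpolation error on $F^{-1}$; the fourth vanishes by weak-$*$ convergence against the continuous test functions $g_i$. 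Hence $\int G^T d\mu^\ast = h$, i.e.\ $\mu^\ast$ is feasible in (\ref{m}). Weak-$*$ lower semicontinuity of the total variation norm (which is the dual norm on $M = C^0([t_I,t_F];\mathbb{R}^m)^\ast$) then gives $p^* \leq \|\mu^\ast\|_{TV} \leq \liminf_k \|\mu_{d_k}\|_{TV} = \liminf_d \tilde p_d^*$, closing the loop.

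The main subtlety is the joint limit with three perturbed objects $(G_d, h_d, \mu_d)$: the proof works only because the a priori bound on $\|\mu_d\|_{TV}$ (coming from the upper-bound step) keeps the $\|\mu\|_{TV}$-dependent part of the slack bounded, so that $e_d \to 0$ dominates, and because the slack term $\|B\||x(t_F)|$ exactly matches the order of the $h_d \to h$ error. Everything else is a routine weak-$*$ argument parallel to Lemma \ref{nogap}.
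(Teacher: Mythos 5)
Your argument is correct, and its first half (any solution of (\ref{m}) is feasible for (\ref{eq:probMomApprox}), hence $\tilde p_d^*\leq p^*$) coincides with the paper's. Where you genuinely diverge is the convergence step: the paper never passes to the limit in the approximate problem directly. It introduces the auxiliary problem (\ref{eq:probMomRelax}) with the \emph{true} data $G,h$ but doubled slack $2e_d(\|B\|\,|x(t_F)|+\|\mu\|_{TV})$, establishes the sandwich $p_d^*\leq\tilde p_d^*\leq p^*$ (feasibility of (\ref{eq:probMomApprox})-feasible measures for (\ref{eq:probMomRelax}) by the triangle inequality), and then only has to show $p^*\leq\liminf_d p_d^*$ for a problem whose constraint data are fixed, which it does by citing a standard weak-$*$ compactness/lower-semicontinuity argument (the Banach--Saks--Steinhaus theorem in Royden). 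You instead keep the moving triple $(G_d,h_d,\mu_d)$ and control it with your four-term decomposition; this is sound precisely for the reason you identify, namely the a priori bound $\|\mu_d\|_{TV}\leq p^*+1$ from the upper-bound step keeps the measure-dependent slack of order $e_d$, continuity of $G$ lets weak-$*$ convergence kill the last term, and weak-$*$ lower semicontinuity of the dual (total variation) norm closes the argument. The paper's detour buys a cleaner limit with fixed test functions and a citable lemma at the price of an extra auxiliary problem; your direct route is more self-contained but must carry the uniform TV bound through the limit. Two minor remarks: both you and the paper treat the estimate $|h-h_d|\lesssim e_d\,\|B\|\,|x(t_F)|$ informally (it does not follow literally from $e_d=\|G_d-G\|_\infty$ without an additional assumption on the interpolation error of $F^{-1}$, and the $x(t_I)$ endpoint contribution is neglected), so you are at the same level of rigor as the original there; and, like the paper, you prove convergence from below rather than literal monotonicity, which is all that the ``$\uparrow$'' in the statement is used for.
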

\begin{proof}
By the same argument as Lemma \ref{relax}, a solution for (\ref{eq:probMomApprox}) is attained.
Furthermore, any solution $\mu^*$ of (\ref{m}) is feasible for (\ref{eq:probMomApprox}), as
\[
\begin{array}{rcl}
\left| \int \! G_d^T \, d\mu^* - h_d  \right| & \leq & \left| \int \! G^T \, d\mu^* - h  \right|  +    \left| \int \! (G_d^T-G^T) \, d\mu^*  \right| +  ||B||\,|x(t_F)| \, \left| h_d - h  \right| \\
&\leq & 0 + e_d \, ||\mu^*||_{\textrm{TV}} + e_d\,||B||\,|x(t_F)|.
\end{array}
\]
Therefore, $\tilde{p}_e^* \leq p^*$ holds.

For the convergence, consider the auxiliary relaxed problem with the true data instead:
\begin{equation} \label{eq:probMomRelax}
\begin{array}{rcl}
p_d^* = & \min & \|\mu\|_{TV} \\
& \mathrm{s.t.} & \left| \int \! G^T \, d\mu - h  \right| \leq 2 e_d \,(\|B\|\,|x(t_F)| +  \|\mu\|_{TV}).
\end{array}
\end{equation}
By similar arguments, we have that $p_d^* \leq \tilde{p}_d^* \leq p^*$. Because of this uniform bound on the minimizers of (\ref{eq:probMomRelax}), standard arguments (see for instance the second part of the Banach-Saks-Steinhaus theorem in \cite[Section 13.5]{royden}) show that $p^* \leq \liminf p_d^*$, which show indeed that $p_d^* \rightarrow p^*$, hence $\tilde{p}_d^* \rightarrow p^*$.
\end{proof}

\section{Solving the LP on measures}

To summarize, we have formulated our LTV optimal control problem
as a primal-dual LP pair (\ref{plp}-\ref{dlp}). The coefficients
in these LP problems are entries of matrix function $G(t)$
and vector $h$. These data are calculated by numerical
integration, using Chebyshev polynomial approximations,
as explained in the last section. For a given degree $d$
of the polynomial approximation, LP problem (\ref{plp})
is a particular instance of a generalized problem of
moments. It can be seen as an extension of the approach of \cite{sicon}
which was originally designed for classical optimal control problems
with polynomial dynamics. Alternatively, it can also be understood
as an application of the approach of \cite{impulse}, but after
integration of the ODE, which is here possible because of linearity
of the dynamics in the state and control.

An infinite-dimensional LP on measures can be solved approximately by
a hierarchy of finite-dimensional linear matrix inequality (LMI) problems,
see \cite{sicon,impulse,roa} for details
(not reproduced here). The main idea behind the hierarchy is to manipulate
each measure via its moments truncated to degree $d$.
Note that here the measures are univariate (depending on time only). Therefore, for a problem involving polynomials up to degree $d$, the $d$-th LMI condition is necessary and sufficient. The hierarchy presented in this paper comes thus from the polynomial approximation of the continuous data, not from the moment truncation.
In contrast, when dealing with multivariate measures as in \cite{sicon,impulse,roa},
we use a hierarchy of necessary LMI conditions which become sufficient
only asymptotically.

Generally speaking, the number of variables in an LMI relaxation of order $d$
of a multivariate measure LP grows polynomially in $d$, but the exponent is the number of
variables entering the measures. If the number of variables is equal to 5 or more,
the growth is fast, and
only LMI relaxation of small orders can be solved at a reasonable computational cost.
It is therefore crucial to reduce as much as possible the number of variables
entering the measures, so as to reduce the overall computational burden.
One contribution of our paper is precisely to show that for LTV optimal
control problems, we can manipulate measures of the time variable only.
This allows for LMI relaxations of large order to be solved, with
$d$ a few hundreds. 

\section{Examples}

\subsection{Scalar polynomial example}

Consider the optimal control problem (\ref{ocp}):
\[
\begin{array}{rcll}
p^* & = & \inf & \displaystyle \|u\|_1 := \int_0^1 |u(t)|dt \\
& & \mathrm{s.t.} & \dot{x}(t) = t(1-t)u(t)\\
& & & x(0) = 0, \quad x(1) = 1
\end{array}
\]
where the minimization is w.r.t. a function $u \in L^1([0,1])$.
Since $L^1([0,1])$ is not the dual of any normed space,
the problem must be relaxed to the optimal control problem (\ref{p}):
\[
\begin{array}{rcll}
p^* & = & \min & \displaystyle \|\mu\|_{TV} := \int_0^1 |\mu|(dt) \\
& & \mathrm{s.t.} & x(dt) = t(1-t)\mu(dt)\\
& & & x(0) = 0, \quad x(1) = 1
\end{array}
\]
where the minimization is now w.r.t. a measure $\mu \in M([0,1])$.
Following the approach of Section \ref{moments}, we readily obtain
$F(t)=1$, $G(t)=t(1-t)$ and hence the moment problem (\ref{m}):
\[
\begin{array}{rcll}
p^* & = & \min & \|\mu\|_{TV} \\
& & \mathrm{s.t.} & \displaystyle \langle t(1-t),\mu \rangle := \int_0^1 t(1-t)\mu(dt)  = 1.
\end{array}
\]
Decomposing $\mu=\mu^+-\mu^-$ as a difference of two nonnegative
measures we obtain the LP (\ref{plp}):
\[
\begin{array}{rcll}
p^* & = & \inf & \langle 1,\mu^+ \rangle + \langle 1, \mu^- \rangle  \\
& & \mathrm{s.t.} &
\displaystyle \langle t(1-t),\mu^+ \rangle  - \langle t(1-t),\mu^- \rangle = 1 \\
& & & \mu^+ \geq 0, \quad \mu^- \geq 0
\end{array}
\]
where the minimization is w.r.t. measures $\mu^+, \mu^-$,
and the LP (\ref{dlp}):
\[
\begin{array}{rcll}
d^* & = & \sup & y  \\
& & \mathrm{s.t.} & -1 \leq t(1-t)y \leq 1, \quad \forall~t \in [0,1]
\end{array}
\]
where the maximization is w.r.t. $y \in {\mathbb R}$.
For this latter problem, we readily obtain that the maximum
$d^*=4$ is attained for the choice $y=4$. Since polynomial
$4t(1-t)$ attains its maximum at $t=\frac{1}{2}$ it follows
from the discussion after Lemma \ref{nogap}
that the optimal measure solution is $\mu^+=4\delta_{\frac{1}{2}}$
and $\mu^-=0$, achieving $p^*=4$.

As this problem has polynomial data, we can
readily find the optimal solution, with the following GloptiPoly 3 script \cite{gloptipoly3}, instead of using the approximation techniques of Section V:
\begin{verbatim}
>> mpol tp tn;
>> mp = meas(tp); % measure \mu^+
>> mn = meas(tn); % measure \mu^-
>> P = msdp(min(mass(mp)+mass(mn)), ...
   mom(tp*(1-tp))-mom(tn*(1-tn)) == 1, ...
   tp*(1-tp) >= 0 ... % support of \mu^+
   tn*(1-tn) >= 0);   % support of \mu^-
>> [stat, obj] = msol(P);
obj =
    4.0000
>> double(mmat(mp)) % moment mat. of \mu^+
ans =
    4.0000    2.0001
    2.0001    1.0001
>> double(mmat(mn)) % moment mat. of \mu^-
ans =
   1.0e-08 *
    0.2673    0.2036
    0.2036    0.2186
\end{verbatim}

\subsection{Nonsmooth trajectories}\label{Section:numerical_B}

This example is meant to illustrate the numerical difficulty we
may face when integrating LTV ODEs with discontinuous state matrix
$A(t)$. Consider the following optimal control problem (\ref{ocp}):
\[
\begin{array}{rcll}
p^* & = & \inf & \|u\|_1 \\
& & \mathrm{s.t.} & \dot{x}(t) = \mathrm{sign}(t)x(t)+u(t) \\
& & & x(-1) = -1, \quad x(1) = 1
\end{array}
\]
where the minimization is w.r.t. a function $u \in L^1([-1,1])$.
Upon solving the ODE $\dot{F}(t) = \mathrm{sign}(t)F(t)$, $F(-1)=1$ we obtain
$F(t)=e^{-1+|t|}$, $G(t)=e^{1-|t|}$
and hence the moment problem (\ref{m}):
\[
\begin{array}{rcll}
p^* & = & \inf & \|\mu\|_{TV} \\
& & \mathrm{s.t.} & \langle e^{1-|t|},\mu \rangle = 2
\end{array}
\]
where the minimization is now w.r.t. a signed measure $\mu \in M([-1,1])$.
The dual problem (\ref{dlp}) reads:
\[
\begin{array}{rcll}
d^* & = & \sup & 2y \\
& & \mathrm{s.t.} & -1 \leq ye^{1-|t|} \leq 1 \quad \forall~t\in[-1,1]\\
\end{array}
\]
where the maximization is w.r.t. a real scalar $y$.
The optimal dual solution can easily be found analytically as $y^*=e^{-1}$, from which it follows that
the optimal primal solution is $\mu^*=2e^{-1}\delta_{t=0}$.

\begin{table}[h!]
\centering
\begin{tabular}{c|c|c}
$d$ & $\|G-G_d\|_{\infty}$ & $|p^*-p_d|/|p^*|$ \\ \hline
10 & 0.1995 & 0.1243 \\
20 & 0.0899 & 0.0555 \\
30 & 0.0579 & 0.0357 \\
40 & 0.0427 & 0.0263 \\
50 & 0.0338 & 0.0208 \\
60 & 0.0280 & 0.0172 \\
70 & 0.0239 & 0.0147 \\
80 & 0.0208 & 0.0128 \\
90 & 0.0184 & 0.0114 \\
100 & 0.0166 & 0.0102 \\
\end{tabular}
\caption{Approximation errors vs. polynomial approximation orders for
discontinuous dynamics.\label{ex2tab}}
\end{table}

If we want to apply the numerical integration approach of Section \ref{integration},
discontinuity of $A(t)=\mathrm{sign}(t)$ turns out to be a problem. Indeed, we can check
that $G(t)$ presents a cusp at $t=0$. That is, for an even number $d$ of
Chebyshev interpolation points, the maximum interpolation error
$\|G-G_d\|_{\infty}$ is located
where the optimal impulse should be, while for an odd $d$ the interpolant
is exact at the cusp. In Table \ref{ex2tab} we present the approximation
errors on function $G$ and on the optimal cost $p_d^*$ as functions of $d$, found by our numerical method.
The results exhibit the expected linear decrease in the approximation error,
resulting in a linear decrease in the optimal cost. The error for moderate
orders $(d\approx 100)$ is acceptable though far from excellent.

\begin{table}[h!]
\centering
\begin{tabular}{c|c|c}
$d$ & $\|G-G_d\|_{\infty}$ & $|p^*-p_d|/|p^*|$ \\ \hline
2 & $2.1187\cdot10^{-1}$ & $3.4307\cdot10^{-3}$ \\
4 & $1.0851\cdot10^{-3}$ & $1.0406\cdot10^{-5}$ \\
6 & $2.2540\cdot10^{-6}$ & $1.5281\cdot10^{-8}$ \\
8 & $2.5115\cdot10^{-9}$ & $1.3166\cdot10^{-11}$ \\
10 & $1.7422\cdot10^{-12}$ & $5.9438\cdot10^{-13}$ \\
12 & $1.1336\cdot10^{-15}$ & $5.8093\cdot10^{-12}$ \\
14 & $<10^{-15}$ & $1.3729\cdot10^{-11}$ \\
16 & $<10^{-15}$ & $3.1846\cdot10^{-12}$ \\
18 & $<10^{-15}$ & $2.5462\cdot10^{-10}$ \\
\end{tabular}
\caption{Approximation errors vs. polynomial approximation orders for
piecewise analytic dynamics.\label{ex2btab}}
\end{table}

Now, if we split the time interval around the cusp, the restrictions
of $G(t)$ on the intervals $[-1,0]$ and $[0,1]$ are respectively $G_L(t):=e^{1+t}$ and
$G_R(t):=e^{1-t}$ which are both analytic. The updated moment problem
with piecewise analytic data reads:
\[
\begin{array}{rcll}
p^* & = & \inf & \|\mu_L\|_{TV}+\|\mu_R\|_{TV} \\
& & \mathrm{s.t.} & \displaystyle \int_{-1}^0 G_L(t)\mu_L(dt)
+ \int_{0}^{1} G_R(t)\mu_R(dt) = 2.
\end{array}
\]
In Table \ref{ex2btab}, we present the updated polynomial approximation
errors, which show the expected exponential decrease. For the cost,
the decrease is exponential until a relative error of about $10^{-10}$
after which the error comes from the semidefinite solver (used to
solve the LMI hierarchy of the moment problem), not from
the polynomial approximation.

\subsection{A fuel-optimal linear impulsive guidance rendezvous problem for an elliptic reference orbit}\label{Section:numerical_rdv}

\begin{figure*}[!t]
\tiny
\begin{equation}
\begin{array}{@{}r@{\:}c@{\:}l@{}}
A(t) & = & \left[\begin{array}{cccc}
0 & 0 & 1 & 0\\
0 & 0 & 0 & 1\\
\bar{n}^2 e\,\cos \nu(t) \left(\dfrac{1 + e\,\cos \nu(t)}{1 - e^2} \right)^3  &
-2\,\bar{n}^2\,e\,\sin \nu(t) \left(\dfrac{1 + e\,\cos \nu(t)}{1 - e^2} \right)^3 &
0 &
2\bar{n}\dfrac{\left(1 + e\,\cos \nu(t)\right)^2}{\left(1 - e^2\right)^{3/2}}\\
2\,\bar{n}^2\,e\,\sin \nu(t) \left(\dfrac{1 + e\,\cos \nu(t)}{1 -e^2} \right)^3 &
\bar{n}^2(3+e\cos\nu(t))\left (\dfrac{1+e\cos\nu(t)}{1-e^2}\right )^3 &
-2\bar{n}\dfrac{\left(1 + e\,\cos \nu(t)\right)^2}{\left(1 - e^2\right)^{3/2}} & 0
\end{array}\right ]\\
B(t) & = & \left[\begin{array}{cc}
0 & 0\\
0 & 0\\
\bar{n}^2 & 0\\
0 & \bar{n}^2
\end{array}\right]\end{array}
\label{A_TH_B_TH}
\end{equation}
\hrulefill
\end{figure*}

Finally, an illustration based on a realistic case of a far range rendezvous in a linearized gravitational field is given. The general framework of the minimum-fuel fixed-time coplanar rendezvous problem in a linear setting is recalled in \cite{Arzelier11}, where an indirect method based on primer vector theory is proposed. Under Keplerian assumptions and for an elliptic reference orbit, the complete rendezvous problem may be decoupled between the out-of-plane rendezvous problem for which an analytical solution may be found and the coplanar problem. Therefore, only a coplanar elliptic rendezvous problem based on the Tschauner-Hempel equations \cite{Tschauner65} is studied thereafter. The associated optimal control problem
(\ref{ocp}) has a 4-dimensional ($n=4$) state vector composed of the relative positions
(denoted here $x(t)$, $z(t)$) and respective velocities in the LVLH frame \cite{Louembet11}
and a 2-dimensional ($m=2$) control vector $u(t)$
(one control in the $x$-direction, one control in the $z$-direction).
The state space matrices $A(t)$ and $B(t)$ are given in equ. (\ref{A_TH_B_TH}),
with $\bar{n}=34.0094$ is the mean angular motion, $e=4.0000\cdot~10^{-3}$ is the
eccentricity, $\nu(t)$ is the true anomaly
of the reference orbit satisfying for all $t\geq 0$ the Kepler equation:
\[
\begin{array}{rcl}
\bar{n}t & = & E(t)-e\,\sin E(t)\\
\tan \dfrac{\nu(t)}{2} & = & \left (\dfrac{1+e}{1-e}\right )^{1/2}\tan\dfrac{E(t)}{2}
\end{array}
\label{Kepler}
\]
where $E(t)$ is the eccentric anomaly.

For this problem, $G(t)$ can be approximated below the $10^{-8}$ resolution of the SDP solver by polynomials of degree $100$ on the given time interval. This implies that the problem can be solved numerically by LMI relaxations of order $50$, with a computational load of a few seconds. The assembly of $G(t)$ using Chebfun routines take a few seconds as well. All of this was done without any sort of problem-specific optimization, and with standard Matlab code.

To solve the optimal control problem, direct methods based on the solution of a linear programming (LP) problem can be used as in \cite{Mueller08,Louembet11}. For an a priori fixed number of impulsive
maneuvers at given times, an LP problem is formulated and solved numerically. Its solution is therefore suboptimal, depending strongly upon the number of impulsions. This makes it hard to evaluate
how far the solution could be from the global optimum.

The particular instance, studied in this paper, is borrowed from the PRISMA test bed and GNC experiments from \cite{Berges11}. PRISMA programme
is a cooperative effort between the Swedish National Space Board (SNSB),
the French Centre National d'Etudes Spatiales (CNES), the German Deutsche
Zentrum f\"{u}r Luft- und Raumfahrt (DLR) and the Danish Danmarks Tekniske Universitet (DTU) \cite{Larsson06}. Launched on June 15, 2010 from Yasny (Russia), it was intended to test in-orbit
new guidance schemes (particularly autonomous orbit control) for formation flying and rendezvous
technologies. This mission includes the FFIORD experiment led by CNES, which features a rendezvous maneuver (formation acquisition).
To save fuel and allow for in-flight testing throughout the FFIORD experiment, the rendezvous maneuver must last several orbits. Duration of the
rendezvous is approximately 14.25 orbital periods, each of duration $5920s$,
i.e. $t_I=0$ and $t_F=84360$ in problem (\ref{ocp}).

\begin{table}[h!]
\center
\begin{tabular}{c|c|c|c|c}
  & 2-impulse & LP (20 i.) & LP (2000 i.) & LMI\\
\hline $t_0$ & 0 & 0 & 0 & 0\\
\hline $t_1$ & - & $8880$ & 2321 & 2140\\
\hline $t_2$ & - & $75480$ & 2363 & 82350\\
\hline $t_3$ & 84360 & $84360$ & 82334 & -\\
\hline
$u(t_0)$ & $\begin{array}{c}0.02\\ -0.1985\end{array}$ &$\begin{array}{c} 0.0006 \\ 0\end{array} $ & $\begin{array}{c}0.0174\\ 0\end{array}$ & $\begin{array}{c}0.0172\\ 0\end{array}$ \\
\hline
$u(t_1)$ & - &$\begin{array}{c} 0.0014 \\   0 \end{array} $ & $\begin{array}{c}0.0193\\ 0\end{array}$ & $\begin{array}{c}0.0011\\ 0\end{array}$\\
\hline
$u(t_2)$ & - &$\begin{array}{c} -0.0146 \\   0 \end{array} $ & $\begin{array}{c}-0.0003\\ 0\end{array}$ &$\begin{array}{c}-0.019\\ 0\end{array}$\\
\hline
$u(t_3)$ & $\begin{array}{c}-0.0215 \\ 0.218\end{array} $ &$\begin{array}{c}-0.0207  \\  0\end{array} $ & $\begin{array}{c}-0.019\\ 0\end{array}$ & -\\
\hline
cost m/s & $0.4588$ & $0.04072$ & 0.03736 & 0.03736
\end{tabular}
\caption{\label{prisma_res}  Comparisons of results of moment LMI approach, two-impulse solution and direct LP solution for 200 and 2000 impulses for the PRISMA case study}
\end{table}

The moment LMI approach is compared to the classical 2-impulse solution and to the direct approach with 20 and 2000 pre-assigned evenly distributed impulses. These results are presented in Table \ref{prisma_res},
where $t_k$ are the impulsion times and $u(t_k)$ the impulsion directions, for $k=0,1,2,3$.
It is interesting to note that the 2-impulse solution has a prohibitive cost when compared to the optimal solution found by the direct method and the moment LMI approach. This is mainly due to the extra thrusts in the $z$-direction that are not necessary to realize this particular rendezvous. Indeed, a remarkable feature of the optimal solution is that it exhibits a terminal coast, unlike the 2-impulse and direct 20-impulse solution. Note also that the 2-impulse solution leads to a very different trajectory in the orbital plane as shown by Figure \ref{fig:trajectoires_comparaisons}. The optimal fuel-cost computed via the moment LMI approach is less than half the one obtained in \cite{Berges11}
via a pseudo-closed loop technique ($0.086$ m/s).
\begin{figure}[ht!]
\centering
\includegraphics[width=\columnwidth]{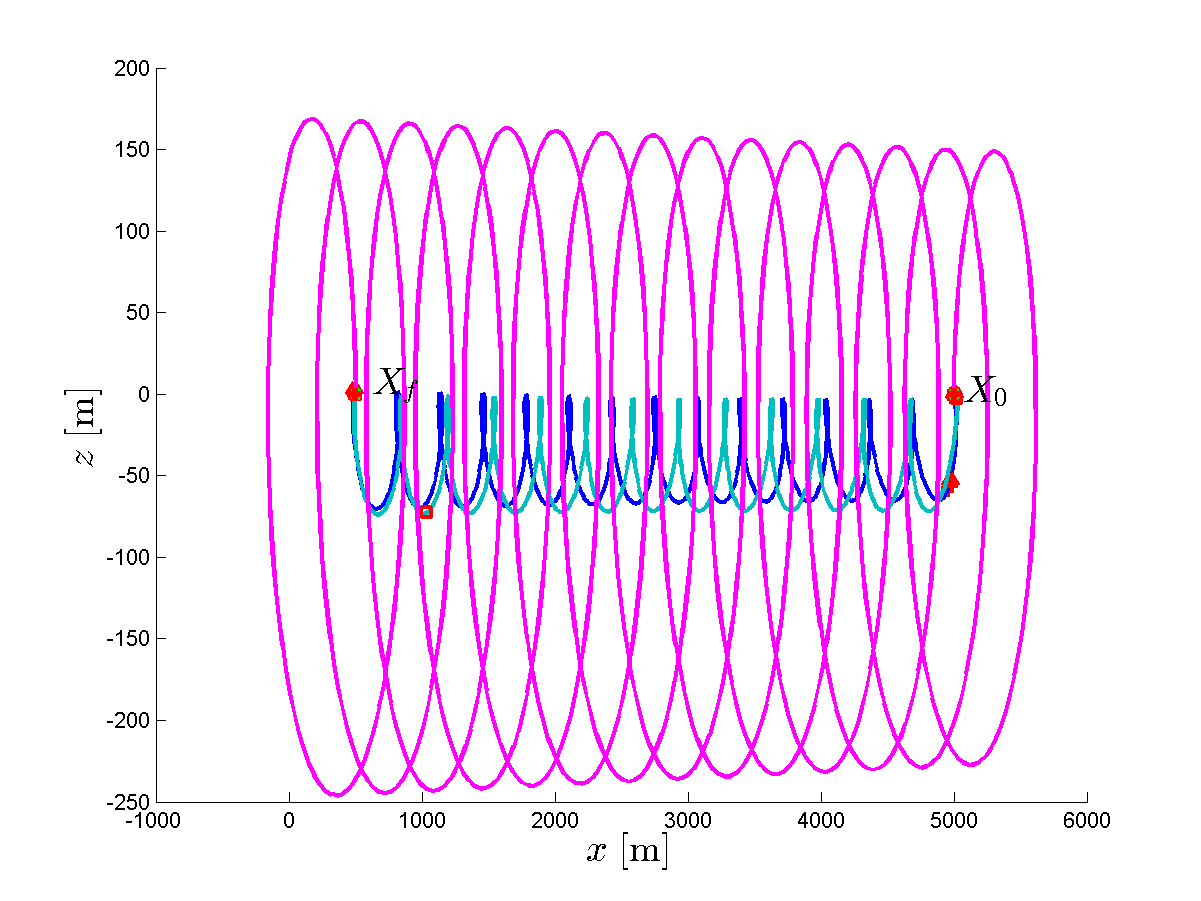}
\caption{Trajectories in the orbital plane: 2-impulse solution (pink), optimal solution (blue), Direct 20-impulses solution (cyan).}
\label{fig:trajectoires_comparaisons}
\end{figure}
Figure \ref{fig:trajectoires_comparaisons_20_2000_moments} shows the importance of the pre-assigned number of impulses for the direct method to recover the optimal solution. Indeed, for a small number of impulses, the solution given by the direct method is a crude approximation of the optimal solution obtained by the moment LMI approach and by the direct approach with 2000 pre-assigned impulses.
The impulse locations are indicated on the trajectories (red triangles for the optimal solution and red squares for the 20-impulse solution). The differences of locations of the impulses for the four methods used on the PRISMA example are depicted on Figure \ref{fig:impulsions_comparaisons}.
\begin{figure}[ht!]
\centering
\includegraphics[width=\columnwidth]{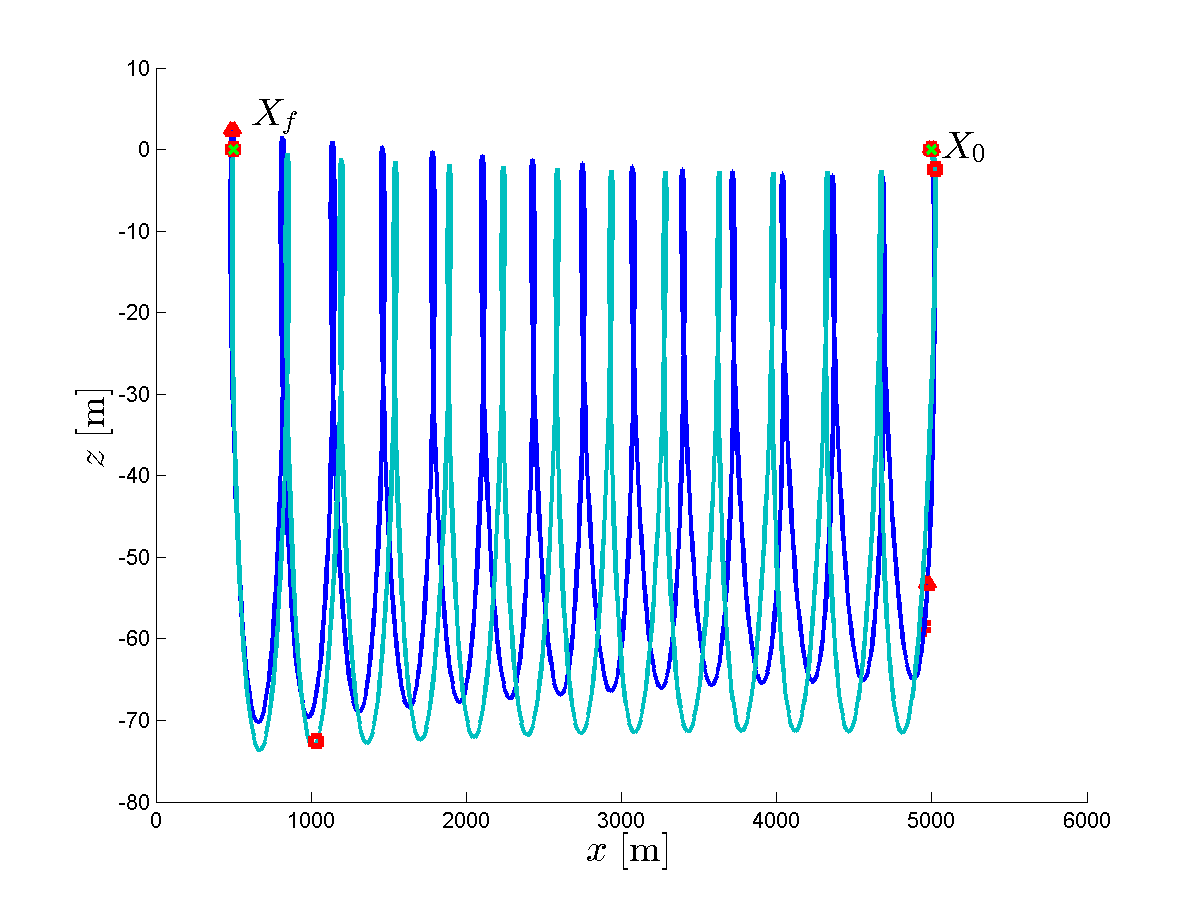}
\caption{Trajectories in the orbital plane: optimal solution (blue), Direct 20-impulses solution (cyan) .}
\label{fig:trajectoires_comparaisons_20_2000_moments}
\end{figure}

\begin{figure}[ht!]
\centering
\includegraphics[width=\columnwidth]{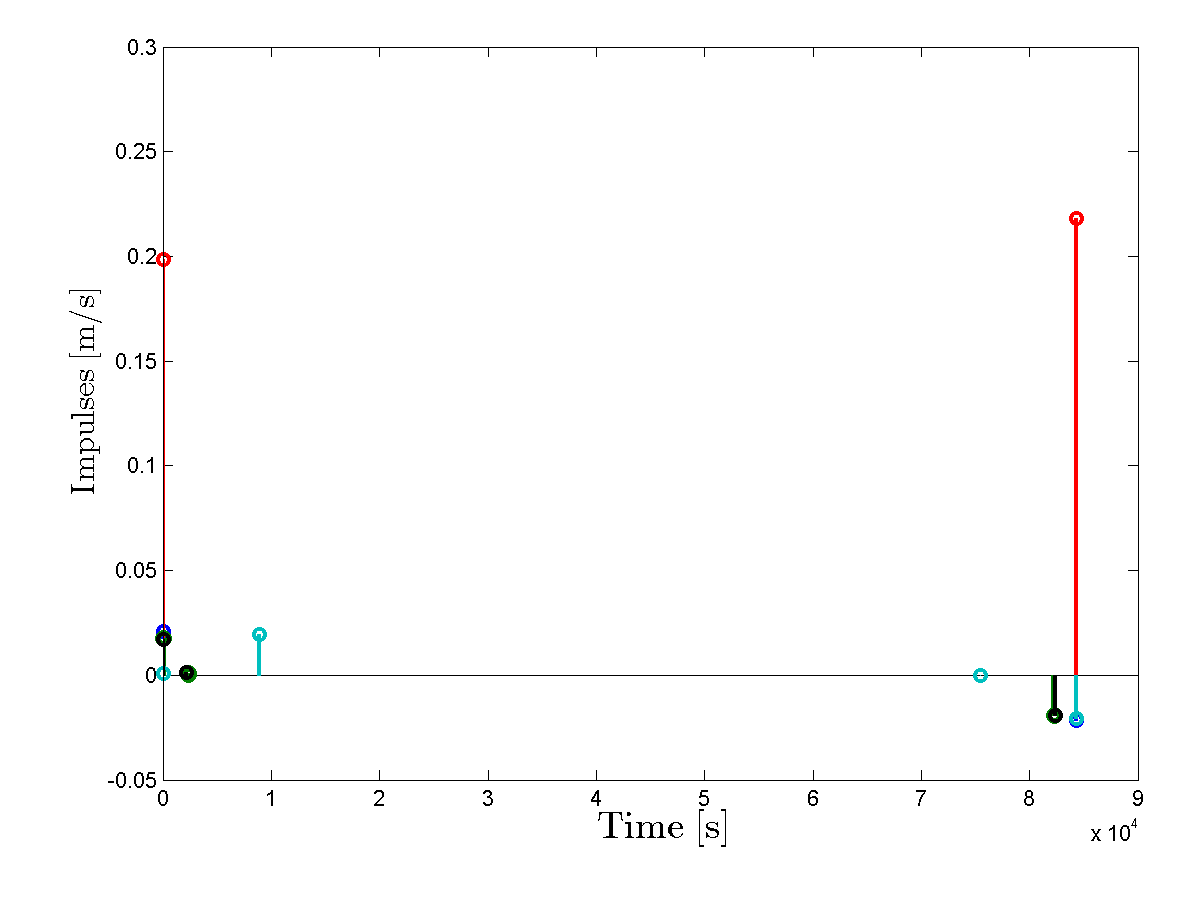}
\caption{Impulses for 2-impulse solution (red and blue), direct 20-impulse solution (cyan), optimal solution (black).}
\label{fig:impulsions_comparaisons}
\end{figure}

\section{Conclusion}

In this paper, we revisit classic impulsive control theory from the 1960s with modern numerical tools stemming from convex programming and approximation theory. The theory leads to the reformulation of a control problem as a one-dimensional problem of moments. The numerical tools allow for its efficient numerical solution without any expert knowledge needed.

Several extensions of our work are possible and will be included in an extended version of this paper. First of all, the class of dynamics can easily be extended to cover those with a forced drift term, i.e. to dynamics
\begin{equation*}
  \dot{x} = A(t)x(t) + B(t)u(t) + w(t)
\end{equation*}
with $w(t) \in C^1([t_I, t_F],\mathbb{R}^n)$. The second main extension is to enforce positivity constraints on linear combinations of the states using additional positive measures. Finally, the method can also consider other norms of the form $||u(\cdot)||_p := \int \! |u(t)|_p \, dt$ with $p=2,3,...$, instead of the $p=1$ case that is the focus of this paper. This can be done since norms with integer
exponents are semidefinite representable.

For the specific case of orbital rendezvous as exemplified in Section~\ref{Section:numerical_rdv}, one can exploit the specific symmetries of (\ref{A_TH_B_TH}) to approximate  $G(t)$ off-line on half an orbital period, and partition the time interval accordingly such as presented in Ex.~\ref{Section:numerical_B}. This will lead to accurate, low order polynomial approximations, and one could expect to obtain computational loads compatible for an online implementation of a model predictive control loop.

\end{document}